\newtheorem{remark}{Remark}[section]
\newtheorem{example}{Example}[section]
\newtheorem{proposition}{Proposition}[section]
\numberwithin{equation}{section} \numberwithin{theorem}{section}
\newcommand{\red}{\textcolor[rgb]{1.00,0.00,0.00}}
\title{\huge \bf On sums of dependent random lifetimes under the Time Transformed Exponential model}
\author{{\bf Jorge
		Navarro$^{(1)}$\footnote{Corresponding author; \
			tel.: +34 868883509 ;  \ fax: +34 868884182} \  Franco Pellerey$^{(2)}$ and Julio Mulero$^{(3)}$}\\
	$^{(1)}$ Departamento de Estadística e Investigación Operativa\\
	Universidad de Murcia, 30100 Murcia, Spain. \\ ORCID ID: 0000-0003-2822-915X\\Email: jorgenav@um.es\\ $^{(2)}$ Dipartimento di Scienze Matematiche\\
	Politecnico di Torino,	10129 Torino, Italy\\
	ORCID ID: 0000-0002-8983-855X\\ Email: franco.pellerey@polito.it \\ $^{(3)}$ Departamento de Matemáticas\\ Universidad de
	Alicante 
	 03080 Alicante, SPAIN \\
	ORCID ID:  0000-0001-5949-7611\\
	Email: julio.mulero@ua.es \
}
\date{}
\begin{document}
%
%
\maketitle
\newpage

\begin{abstract}
Considered a pair of random lifetimes whose dependence is described by a Time Transformed Exponential model, we provide analytical expressions for the distribution of their sum. These expressions are obtained by using a representation of the joint distribution in terms of multivariate distortions, which is an alternative approach to the classical copula representation. Since this approach allows to obtain conditional distributions and their inverses in simple form, then it is also shown how it can be used to predict the value of the sum from the value of one of the variables (or vice versa) by using quantile regression techniques. \\
	
\noindent {\bf Keywords:} Dependence models, C-convolution, distorted distributions, quantile regression, confidence bands.

\noindent
\end{abstract}
%

\section{Introduction}

Let $\mathbf{X}=(X_1,X_2)$ be a pair of dependent lifetimes. The vector $\mathbf{X}$ is said to be described by a \textit{Time Transformed
Exponential model} (shortly, TTE model) if its joint survival function $\bar{F}$ can be written as
\begin{equation}\label{TTE}
\mathbf{\bar{F}}(x_1,x_2)=\bar G(R_1(x_1)+R_2(x_2)), \ x_1,x_2 \ge 0,
\end{equation}
for a suitable one-dimensional, continuous, convex and strictly decreasing survival function $\bar{G}$ and two suitable continuous and
strictly increasing functions $R_i:[0,+\infty)\rightarrow [0,+\infty)$ such that $R_i(0)=0$ and $\lim_{x\rightarrow
\infty}R_i(x)=\infty$, for $i=1,2$. Clearly, the marginal survival functions for the lifetimes $X_i$ are given by $\bar{F}_i(x_i)=\bar{G}(R_i(x_i))$, $x_i \ge 0$, $i=1,2$.

TTE models have been considered in literature as an appropriate manner to describe bivariate lifetimes (see, e.g., \cite{BS05,MPR2010,NM20} and references therein). Their main
characteristic is that they ``separate", in a sense, aging of single lifetimes through the functions $R_i$, and dependence properties through $\bar{G}$, the copula $\widehat C$ being a transformation of $\bar G$ only (see Eq. \eqref{2-1} below and the reference above for details).
This model is of interest in a variety of applicative fields since it is equivalent to the random frailty model, which assumes that the two lifetimes are conditionally independent given a random parameter that represents the risk due to a common environment; the well-known proportional hazard rate Cox model, where the proportional factor is not fixed but random, is an example. In  this case, the different choices for the function $\bar{G}$ are obtained just by changing the distribution of random parameter.

For a number of applicative purposes, one can be interested in the sum $S=X_1+X_2$. This happens, for example, in considering the total lifetime in stand-by systems, where a component is replaced by a new one under the same environmental stress after its failure, or in insurance theory, where the sum of two depended claims, due to common risks, may be evaluated. In this case, because of the dependence between $X_1$ and $X_2$, the classical convolution can not be applied to determine the distribution of $S$, and C-convolutions must be used (see, e.g., \cite{BLS15,CGM16,NS20} for definition and examples of C-convolutions). That is, one can calculate the survival function of $S$ as
\begin{equation}\label{C-conv}
\bar{F}_S(s)=\Pr(S>s)=
\int_{-\infty}^{\infty} f_1(x)\ \partial_{1}\widehat C(\bar F_1(x),\bar F_2(s-x))dx,
\end{equation}
where $\bar F_1$ and $\bar F_2$ are the marginal survival functions of $X_1$ and $X_2$, respectively, $f_1$ is the density function of $X_1$ (assuming its existence) and $\hat C$ is the survival copula of the vector $\mathbf{\mathbf{X}}$. Here, $\partial_{1}\hat C$ means the partial derivative of $\widehat C$ with respect to its first argument. Note that, in particular, for nonnegative random variables Eq. \eqref{C-conv} reduces to
\begin{align*}
\bar{F}_S(s)&
=\bar F_1(s)+
\int_{0}^{s} f_1(x)\  \partial_{1}\widehat C(\bar F_1(x),\bar F_2(s-x))dx, \ \ \ t \ge 0.
\end{align*}

Since usually the integrals appearing in previous formulas are not easy to be solved, we describe in this paper an alternative tool to deal with the sum $S=X_1+X_2$ that can be used when the joint distribution of $\mathbf{X}$ is defined as in Eq. \eqref{TTE}. This approach is based on an alternative representation for the survival function of $\mathbf{X}$, which make use of the distortion representations of multivariate distributions recently introduced in \cite{NCLD20}, whose definition is provided in the next section.
The advantage of this approach is twofold: it is particularly useful when the inverse of $\bar G$ is not available in closed form, thus also $\widehat C$, and it also provides simple representations of the conditional distribution of $S$ given one of the $X_i$, and of its inverse, so that one can use it to predict the value of the sum from the value of one of the variables (or vice versa) by using quantile regression techniques. The purpose of this paper is to describe such an approach.

The rest of the paper is structured as follows. Basic definitions, notations and some preliminary results are introduced in Section 2. The main results for the representation of the distribution of the sum $S$ are provided in Section 3, while examples of their application in prediction are presented in Section 4 and Section 5.

Throughout the paper the notions increasing and decreasing are used in a wide sense, that is, they mean non-decreasing and non-increasing, respectively, and we say that $f$ is increasing (decreasing) if $f(\mathbf{x})\leq f(\mathbf{y})$ for all $\mathbf{x}\leq \mathbf{y}$ (where this last inequality means that for every $i$-th component of the vectors one has $x_i\leq y_i$).
Also, if $f$ is a real valued function in more than one variables, then $\partial_i f$ denotes the partial derivative of $f$ with respect to its $i$-th variable. Analogously,  $\partial_{i,j} f=\partial_i \partial_j f$ and so on.  Whenever we use a partial derivative we are tacitly assuming that it exists.

\section{Notation and preliminary results}

To simplify the notation we just consider here the bivariate case; the extension to the $n$-dimensional case is straightforward. Moreover, we just consider nonnegative random variables with absolutely continuous distributions even if many of the properties described below can be extended to arbitrary random variables.

Thus, let $\mathbf{X}=(X_1,X_2)$ be a  random vector with two possibly dependent nonnegative random variables having an absolutely continuous joint distribution function $\mathbf{F}$ and marginal distributions $F_1$ and $F_2$. Let $\mathbf{f}$ be the joint probability density function (PDF) of $(X_1,X_2)$ and let $f_1$ and $f_2$ be the PDFs of $X_1$ and $X_2$, respectively. Then it is well known (see, e.g., \cite{N06}) that, from Sklar's Theorem, there exists a unique absolutely continuous copula $C$ such that $\mathbf{F}$  can be written as
\begin{equation}\label{Cop}
\mathbf{F}(x_1,x_2)=\Pr(X_1\leq x_1,X_2\leq x_2)=C(F_1(x_1), F_2(x_2))
\end{equation}
for all $x_1,x_2$. As a consequence, the PDF function can be obtained as
\begin{equation*}\label{c}
\mathbf{f}(x_1,x_2)=f_1(x_1)f_2(x_2)c(F_1(x_1), F_2(x_2)),
\end{equation*}
where $c:=\partial_{1,2}C$ is the PDF of the copula $C$. A similar representation holds for the joint survival function
$$	\mathbf{\bar F}(x_1,x_2)=\Pr(X_1>x_1,X_2>x_2)=\widehat C(\bar F_1(x_1),\bar F_2(x_2))$$
for all $x_1,x_2$, where $\bar F_1(x_1)=\Pr(X_1>x_1)$ and $\bar F_2(x_2)=\Pr(X_2>x_2)$ are the marginal survival functions and $\widehat C$ is another suitable copula, called {\it survival copula}.

In the particular case of TTE models, i.e., in the case that the joint survival function $\mathbf{\bar F}$ is defined as in Eq. \eqref{TTE}, then the corresponding survival copula $\widehat C$ is the \textit{strict bivariate Archimedean}  copula (see, e.g.,  \cite{N06}, p. 112) defined as
\begin{equation*}\label{2-1}
\widehat C(u_1,u_2)=\bar G(\bar G^{-1}(u_1)+\bar G^{-1}(u_2)) \
\end{equation*}
for all  $u_1,u_2 \in [0,1]^2$. This model contains many families of copulas (see,  \cite{N06}, p. 117), thus it is a very general dependence model. The inverse function $\bar G^{-1}$ is called the {\it additive generator} of the copula..

However, an alternative representation of $\mathbf{\bar F}$ based on distortion representations of multivariate distributions can be given. In some cases, and for specific applications, such an alternative representation can be preferable to classical copula approach.

For it first recall that a function $d:[0,1]\to[0,1]$ is said to be a {\it distortion function} if it is continuous, increasing  and satisfies $d(0)=0$ and $d(1)=1$. If $G$ is a distribution function, we say that $F$ is a {\it distorted distribution} from $G$ if there exists a distortion function $d$ such that $F(x)=d(G(x))$ for all $x$, and similarly for the survival functions. This kind of representations were introduced in the theory of decision under risk (see e.g. \cite{W96,Y87}) and they were also applied in the fields of coherent systems, order statistics and conditional distributions (see, e.g., \cite{NASS13,NS18} and the references therein).

These representations were further extended to the multivariate case in the recent paper \cite{NCLD20}. According to what defined there, and restricting to the bivariate case, a function $D:\mathbb{R}^2\to \mathbb{R}$ is a {\it bivariate distortion} if it is a continuous $2$-dimensional distribution with support included in $[0,1]^2$, and a bivariate distribution function $\mathbf{F}$ is a distortion of the univariate distribution functions $H_1$ and $H_2$ if there exists a bivariate distortion $D$ such that
\begin{equation}\label{D}
\mathbf{F}(x_1,x_2)=D(H_1(x_1),H_2(x_2))
\end{equation}
for all $x_1,x_2$.

This representation is similar to the copula representation, but here the $H_i$ are not necessarily the marginal distribution of $\mathbf{X}$ and $D$ is not necessarily a copula. Actually, in some situations, we can choose a common univariate distribution $H=H_1=H_2$, and some examples will be provided later (see also \cite{N21,NCLD20}). Moreover, if $D$ has uniform univariate marginal distributions over the interval $(0,1)$, then $D$ is a copula, $H_1,H_2$ are the marginal distributions and \eqref{D} is the same as the copula representation \eqref{Cop} (but only in this case).

The main properties of model \eqref{D} were given in \cite{NCLD20} and they are very similar to that of copulas. For example, if $D$ is a distortion function, then the right-hand side of \eqref{D} defines a proper multivariate distribution function for any univariate distribution functions $H_1$ and $H_2$. Moreover, a similar representation holds for the joint survival function, that is, one can write
\begin{equation}\label{genGK}
\mathbf{\bar F}(x_1,x_2)=\widehat  D(\bar H_1(x_1),\bar H_2(x_2)),
\end{equation}
where $\mathbf{\bar F}(x_1,x_2)=Pr(X_1>x_1,X_2>x_2)$,  $\bar H_i=1-H_i$ for $i=1,2$, and $\widehat D$ is another suitable distortion function.

For TTE model note that, defining
\begin{equation}\label{Hi}
\bar H_i(x_i)=\exp(-R_i(x_i)),
\end{equation}
one has
$$
\mathbf{\bar F}(x_1,x_2)=\bar G(R_1(x_1)+R_2(x_2))=\bar G(-\ln\bar H_1(x_1)-\ln \bar H_2(x_2))=\widehat D(\bar H_1(x_1),\bar H_2(x_2))
$$
for all $x_1,x_2 \ge 0$, where
\begin{equation}\label{hatD-2}
\widehat D(u,v)=\bar G(-\ln(uv)), \ \ u,v \in[0,1].
\end{equation}
The function $\widehat D$ satisfies the property to be a bivariate distortion if $\bar G$ satisfies the properties mentioned above, i.e., if it is an absolutely continuous strictly decreasing convex function in $[0,\infty)$ with $\bar G(0)=1$ and $\bar G(\infty)=0$. Note that if we add $\bar G(t)=1$ for $t<0$, then $\bar G$ is the survival function of a nonnegative random variable. Also note that $\bar H_1$ and $\bar H_2$ are two arbitrary survival functions satisfying $\bar H_1(0)=\bar H_2(0)=1$. Thus, a representation through multivariate distortions as in \eqref{D} holds for the TTE models, with $\widehat D$ defined as in \eqref{hatD-2}.

It must be pointed out that with this representation the marginal survival functions $\bar F_i, i=1,2,$ are not explicitly displayed, but can be obtained as
$$\bar F_i(x_i)=\bar G(-\ln\bar H_i(x_i))=\widehat D(\bar H_i(x_i),1)=\widetilde d(\bar H_i(x_i)), \ \ x_i \ge 0,$$
where $\widetilde d(u)=\bar G(-\ln u),$ $u \in [0,1]$, is a univariate distortion function. Finally, note that the representation through the multivariate distortion \eqref{hatD-2} and the univariate survivals \eqref{Hi} is a copula representation if and only if $\widehat D(u,1)=u,$
that is, $\bar G(-\ln(u))=u$ for $0\leq u\leq 1$. This property leads to $\bar G(x)=\exp(-x)$ for $x\geq 0$ and  $\widehat D(u,v)=uv$ for $u,v\in[0,1]$ which is the product copula that represents the independence case. For other (non-exponential) survival functions $\bar G$, we obtain models with dependent variables, whose dependence is described by $\bar G$.

As an interesting particular case, this dependence model includes the one recently proposed in \cite{GK} for nonnegative random variables, which is characterized (see Proposition 3.1 in \cite{GK}) by the joint survival function
	\begin{equation}\label{GK-model}
	\mathbf{\bar F}(x_1,x_2)=\bar G (\alpha x_1+\beta x_2)
	\end{equation}
	for $x_1,x_2\geq 0$, where $\alpha,\beta>0$ are two positive scale parameters and $\bar G$ satisfies the above mentioned properties. This model, that from now on will be referred as \emph{GK-model} (where the letters G and K indicates the initials of the authors Genev and Kolev of reference \cite{GK}) is an extension of the well known Schur-constant model which is obtained when $\alpha=\beta$ (see \cite{CS94} and references therein). Properties of this model and of the corresponding sum $X_1+X_2$ are studied also in \cite{PN21}.
It must be observed that the marginal survival functions are $\bar F_1(x_1)=\bar G(\alpha x_1)$ and $\bar F_2(x_2)=\bar G(\beta x_2)$ for $x_1,x_2\geq 0$, and both of them belong to the scale parameter model defined by $\bar G$. Actually, this model is obtained by the distortion of univariate exponential distributions, i.e., if \eqref{GK-model} holds, then
$$\mathbf{\bar F}(x_1,x_2)=\widehat D(\bar H_1(x_1),\bar H_2(x_2) )$$
for all $x_1,x_2\geq 0$, where $\bar H_1(x_1)=\exp(-\alpha x_1)$, $\bar H_2(x_2)=\exp(-\beta x_2)$ and $\widehat D$ is defined as in Eq. \eqref{hatD-2}.

\section{Distribution and conditional distribution of the sum}	

In this section we use the distortion representation \eqref{genGK}, with the multivariate distortion $\widehat D$ defined as in \eqref{hatD-2}, to study the sum $S=X_1+X_2$ under the dependence model defined in the preceding section. As a consequence, we also obtain the analogous properties for the GK-model, i.e., the generalization \eqref{GK-model} of the Schur-constant model.

\begin{proposition}
If \eqref{genGK} and \eqref{hatD-2} hold for $(X_1,X_2)$ and $S=X_1+X_2$, then the joint PDF of $(X_1, S)$ is
\begin{equation}\label{g}
	\mathbf{g}(x,s)=r_1(x)r_2(s-x)
\bar G''\left(-\ln\bar H_1(x)-\ln\bar H_2(s-x)\right)
\end{equation}
for $0\leq x\leq s$ (zero elsewhere), where $r_i=(-\ln\bar H_i)'$ is the hazard rate function of $\bar H_i$ for $i=1,2$.
\end{proposition}
\begin{proof}
From \eqref{genGK}, the joint PDF of $(X_1,X_2)$ is
$$\mathbf{f}(x_1,x_2)=h_1(x_1)h_2(x_2)\widehat d(\bar H_1(x_1),\bar H_2(x_2))$$
for $x_1,x_2\geq 0$, where $h_i=-\bar H_i'$ and $\widehat d=\partial_{1,2}\widehat D$. Then the joint PDF of $(X_1,S)$ is
$$\mathbf{g}(x,s)=\mathbf{f}(x,s-x)=h_1(x)h_2(s-x)\widehat d(\bar H_1(x),\bar H_2(s-x))
$$
for $0\leq x\leq s$. The PDF of   our specific distortion function $\widehat D$ is
$$\widehat d(u,v)=\frac 1 {uv} \bar G''(-\ln(uv))$$
and
$$\mathbf{g}(x,s)=\frac{h_1(x)h_2(s-x)}{\bar H_1(x)\bar H_2(s-x)}
\bar G''\left(-\ln\bar H_1(x)-\ln\bar H_2(s-x)\right)
$$
for $0\leq x\leq s$ which concludes the proof.
\end{proof}

\begin{remark}
In particular, for the GK-model in \eqref{GK-model}, that is, with exponential survival functions $H_1$ and $H_2$ with shape parameters (hazard rates) $\alpha$ and $\beta$, the PDF reduces to
$$\mathbf{g}(x,s)=\alpha\beta \bar G''( (\alpha-\beta)x+\beta s )$$
for $0\leq x\leq s$ (zero elsewhere).
Therefore its joint distribution function is
	\begin{align*}
	\mathbf{G}(x,s)
	&=-\int_0^x \int_y^s \alpha\beta G''( (\alpha-\beta)y+\beta t )dtdy\\
	&= \int_0^x  \alpha G'( \alpha y )dy-\int_0^x  \alpha G'( (\alpha-\beta)y+\beta s )dy
	\end{align*}
where $G=1-\bar G$. To solve this integral we consider two cases. If $\alpha \neq \beta$, then
	\begin{align}\label{G-GK1}
	\mathbf{G}(x,s)
	&=G(\alpha x)-\frac {\alpha}{\alpha-\beta}  G ((\alpha-\beta)x+\beta s )+ \frac {\alpha}{\alpha-\beta}  G(\beta s )
	\end{align}
	while  if $ \alpha =\beta$, then
	\begin{align}\label{G-GK2}
	\mathbf{G}(x,s)
	&= G(\alpha x)-\alpha x  G'( \alpha s )
	\end{align}
	for $0\leq x\leq s$. 	In both cases, \eqref{G-GK1} and \eqref{G-GK2} can be represented as distorted distributions from  $G$ by replacing $x$ with $G^{-1}(G(x))$ and  $s$ with $G^{-1}(G(s))$.
	
In particular, as immediate consequence one can obtain the distribution of the sum (C-convolution) for the GK-model as	$$F_S(s)=\lim_{x\to \infty}\mathbf{G}(x,s)=\mathbf{G}(s,s).$$
	If $\alpha \neq \beta$, then
	$$
	F_S(s)= \frac {\alpha}{\alpha-\beta}  G(\beta s )-\frac {\beta}{\alpha-\beta}  G (\alpha s )
	$$
	or if $\alpha=\beta$, then
	$$F_S(s)= G(\alpha s)-\alpha s  G'( \alpha s )$$
	for $s\geq 0$. Note that the first expression is a negative mixture  (i.e. a linear combination with a negative weight) with PDF
\begin{equation}\label{fs1}
f_S(s)=\frac {\alpha\beta}{\beta-\alpha}  \left[ g (\alpha s )-g(\beta s )\right]
\end{equation}
	for $s\geq 0$, where $g=G'$. In the second case, one gets
\begin{equation}\label{fs2}
f_S(s)= -\alpha^2 s G'' (\alpha s)
\end{equation}
	for $s\geq 0$, which is the expression in Remark 2.7 of \cite{CS94} (i.e. for the Schur-constant model).
\end{remark}

The joint survival function of $(X_1, S)$ under the model \eqref{genGK} and \eqref{hatD-2} in the non-exponential case for the $H_i$ is obtained in the following proposition. Unfortunately, an explicit expression can not be provided in general, but it is available in some cases, or easily available numerically (see the examples in the next sections).

\begin{proposition}
	If \eqref{genGK} and \eqref{hatD-2} hold for $(X_1,X_2)$ and $S=X_1+X_2$, then  the joint survival function of $(X_1, S)$ is
	$$\mathbf{\bar G}(x,s)=\bar G(-\ln \bar H_1(s) )+\int_x^s r_1(y)
	g\left(-\ln\bar H_1(y)-\ln\bar H_2(s-y)\right)dy
	$$
	for $0\leq x\leq s$, where $g=-\bar G'$ is the PDF of $\bar G$ and $r_i=(-\ln\bar H_i)'$ is the hazard rate function of $\bar H_i$ for $i=1,2$.
\end{proposition}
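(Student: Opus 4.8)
The plan is to compute the joint survival function $\mathbf{\bar G}(x,s)=\Pr(X_1>x,\,S>s)=\Pr(X_1>x,\,X_1+X_2>s)$ directly, by conditioning on the value of $X_1$ — equivalently, by integrating over $t$ the joint PDF $\mathbf{g}$ of $(X_1,S)$ obtained in the previous proposition. Writing
\[
\mathbf{\bar G}(x,s)=\int_x^{\infty} f_1(y)\,\Pr\!\big(X_2>s-y\mid X_1=y\big)\,dy,
\]
the key observation is that the range of integration must be split at $y=s$: for $y\ge s$ one has $s-y\le 0$, so $\Pr(X_2>s-y\mid X_1=y)=1$ (recall $X_2\ge 0$), while for $x\le y<s$ the conditional probability is genuinely nontrivial. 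This splitting is essentially the only bookkeeping point in the argument.

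For the first piece, $\int_s^{\infty} f_1(y)\,dy=\bar F_1(s)=\bar G(-\ln\bar H_1(s))$, which is exactly the leading term in the statement. For the second piece I would rewrite the conditional survival function in terms of the TTE structure. Differentiating $\mathbf{\bar F}(x_1,x_2)=\bar G(-\ln\bar H_1(x_1)-\ln\bar H_2(x_2))$ with respect to $x_1$ gives
\[
-\partial_1\mathbf{\bar F}(x_1,x_2)=r_1(x_1)\,g\!\big(-\ln\bar H_1(x_1)-\ln\bar H_2(x_2)\big),
\]
since $(-\ln\bar H_1)'=r_1$ and $-\bar G'=g$; in particular, taking $x_2=0$ and using $\bar H_2(0)=1$ we get $f_1(x_1)=r_1(x_1)\,g(-\ln\bar H_1(x_1))$. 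Because $-\partial_1\mathbf{\bar F}(x_1,x_2)=f_1(x_1)\,\Pr(X_2>x_2\mid X_1=x_1)$, dividing the two displays yields
\[
\Pr(X_2>x_2\mid X_1=x_1)=\frac{g\!\big(-\ln\bar H_1(x_1)-\ln\bar H_2(x_2)\big)}{g\!\big(-\ln\bar H_1(x_1)\big)}.
\]
Substituting this with $x_1=y$, $x_2=s-y$ into the integral over $[x,s]$, the factor $f_1(y)=r_1(y)\,g(-\ln\bar H_1(y))$ cancels the denominator and leaves $\int_x^{s} r_1(y)\,g\!\big(-\ln\bar H_1(y)-\ln\bar H_2(s-y)\big)\,dy$, which is the second term in the statement. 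Adding the two contributions completes the proof.

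An equivalent route, perhaps the most natural given that the joint PDF $\mathbf{g}$ is already available from the previous proposition, is to write
\[
\mathbf{\bar G}(x,s)=\int_x^{\infty}\!\int_{\max\{y,s\}}^{\infty}\mathbf{g}(y,t)\,dt\,dy,
\]
use $\mathbf{g}(y,t)=\mathbf{f}(y,t-y)$ together with the change of variable $u=t-y$ to recognize the inner integral as $-\partial_1\mathbf{\bar F}(y,\max\{s-y,0\})$, and then split at $y=s$ exactly as above. The only place that needs care — and the main (mild) obstacle — is respecting the support $\{0\le x\le s\}$ of $\mathbf{g}$ when setting up the double integral, i.e.\ correctly recognizing that for $y>s$ the inner integral recovers the full marginal $f_1(y)$ rather than a truncated quantity, which is precisely what produces the term $\bar G(-\ln\bar H_1(s))$. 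All the differentiations invoked are licit under the smoothness assumptions already in force.
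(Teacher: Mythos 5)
Your proof is correct and follows essentially the same route as the paper: the paper integrates the joint PDF $\mathbf{g}(y,t)$ of $(X_1,S)$ over $\{y>x,\,t>s\}$, splitting at $y=s$ and performing the inner $t$-integration, which is exactly the conditioning-on-$X_1$ decomposition you describe (your ``equivalent route'' is literally the paper's computation, and your primary route merely starts one antiderivative further along by using $-\partial_1\mathbf{\bar F}$ instead of integrating $\bar G''$). All the individual identities you invoke, in particular $f_1(y)=r_1(y)\,g(-\ln\bar H_1(y))$ and the cancellation producing the integrand $r_1(y)\,g(-\ln\bar H_1(y)-\ln\bar H_2(s-y))$, check out.
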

\begin{proof}
	From Eq. \eqref{g} for the PDF of $(X_1, S)$ we get
\begin{align*}
\mathbf{\bar G}(x,s)
&=\int_x^s \int_s^\infty  \mathbf{g}(y,t)
dtdy+ \int_s^\infty \int_y^\infty  \mathbf{g}(y,t)dtdy\\
&=\int_x^s \int_s^\infty
r_1(y)r_2(t-y)
\bar G''(-\ln\bar H_1(y)-\ln\bar H_2(t-y))dtdy\\
&\quad +\int_s^\infty \int_y^\infty
r_1(y)r_2(t-y)
\bar G''(-\ln\bar H_1(y)-\ln\bar H_2(t-y))dt dy\\
&=-\int_x^s r_1(y)
\bar G'(-\ln\bar H_1(y)-\ln\bar H_2(s-y))dy
-\int_s^\infty r_1(y)
\bar G'(-\ln\bar H_1(y))dy\\
&=\bar G(-\ln \bar H_1(s) )+\int_x^s r_1(y)
g(-\ln\bar H_1(y)-\ln\bar H_2(s-y))dy
\end{align*}
which concludes the proof.
\end{proof}

Therefore, the survival function of $S$ can be obtained as
\begin{equation}\label{G2}
\bar F_S(s)=\mathbf{\bar G}(0,s)=\bar G(-\ln \bar H_1(s) )+\int_0^s r_1(y)g\left(-\ln\bar H_1(y)-\ln\bar H_2(s-y)\right)dy
\end{equation}
and its PDF as $f_S(s)=- \partial_2 \mathbf{\bar G}(0,s), \ s \ge 0$.

To get the explicit expression for $\bar F_S$ we need to explicitate $\bar G$ and/or $\bar H_i$ and to solve this integral, eventually numerically. For example, if $\bar H_i(x)=\exp(-x)$ for $x\geq 0$, then
$$\bar F_S(s)=\mathbf{\bar G}(0,s)=\bar G(s)+\int_0^s g(y+s-y)dy=\bar G(s)+ sg(s)
$$
and $f_S(s)=sg'(s)$ for $s\geq 0$ which is the expression in Remark 2.7 of \cite{CS94} for the Schur-constant model.

\section{Predictions}

The purpose of this section is to show how to predict the  value of the sum $S=X_1+X_2$ from $X_1=x$ or vice versa by making use of the results in the previous section. To this purpose we need the conditional distribution of $(S|X_1=x)$ in the TTE dependence model, that is obtained in the following proposition.

\begin{proposition}
	If \eqref{genGK} and \eqref{hatD-2} hold  for $(X_1,X_2)$ and $S=X_1+X_2$, then  the PDF of  $( S|X_1=x)$ is
$$f_{S|X_1}(s|x)=-r_2(s-x)\frac{
	g'(-\ln\bar H_1(x)-\ln\bar H_2(s-x))}{g(-\ln\bar H_1(x))}$$
	and its distribution function is
\begin{equation}\label{G2|1}	
F_{S|X_1}(s|x)	=1-\frac{
		g(-\ln\bar H_1(x)-\ln\bar H_2(s-x))}{g(-\ln\bar H_1(x))}
\end{equation}
	for $0\leq x\leq s$, where $g=-\bar G'$ and $r_2=(-\ln\bar H_2)'$ is the hazard rate function of $\bar H_2$.
\end{proposition}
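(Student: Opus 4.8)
The plan is to obtain the conditional law of $(S\mid X_1=x)$ directly from the joint density of $(X_1,S)$ computed in Proposition 3.1, dividing by the marginal density of $X_1$. First I would recall from Section 2 that $\bar F_1(x)=\widehat D(\bar H_1(x),1)=\bar G(-\ln\bar H_1(x))$, and differentiate to get
$$f_1(x)=-\frac{d}{dx}\,\bar G(-\ln\bar H_1(x))=r_1(x)\,g\big(-\ln\bar H_1(x)\big),$$
where $g=-\bar G'$ and $r_1=(-\ln\bar H_1)'$; since $f_1(x)>0$ on the relevant range the quotient below is well defined. Then, using \eqref{g},
$$f_{S|X_1}(s|x)=\frac{\mathbf{g}(x,s)}{f_1(x)}=\frac{r_1(x)\,r_2(s-x)\,\bar G''\big(-\ln\bar H_1(x)-\ln\bar H_2(s-x)\big)}{r_1(x)\,g\big(-\ln\bar H_1(x)\big)},$$
and substituting $\bar G''=-g'$ and cancelling $r_1(x)$ yields the stated formula for $f_{S|X_1}(s|x)$, valid for $0\le x\le s$ (and zero otherwise, inherited from the support of $\mathbf g$).

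For the distribution function I would integrate this density in $t$ from $x$ to $s$. The key observation is that, for fixed $x$,
$$\frac{d}{dt}\Big[g\big(-\ln\bar H_1(x)-\ln\bar H_2(t-x)\big)\Big]=r_2(t-x)\,g'\big(-\ln\bar H_1(x)-\ln\bar H_2(t-x)\big),$$
so that the integrand $-\,r_2(t-x)\,g'(\cdot)/g(-\ln\bar H_1(x))$ is an exact $t$-derivative. Hence
$$F_{S|X_1}(s|x)=\int_x^s f_{S|X_1}(t|x)\,dt=-\,\frac{g\big(-\ln\bar H_1(x)-\ln\bar H_2(s-x)\big)-g\big(-\ln\bar H_1(x)-\ln\bar H_2(0)\big)}{g\big(-\ln\bar H_1(x)\big)}.$$
Using $\bar H_2(0)=1$, hence $\ln\bar H_2(0)=0$, the subtracted term equals $g(-\ln\bar H_1(x))$, and rearranging gives \eqref{G2|1}.

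Alternatively, one can read the conditional survival function off Proposition 3.2: since $\Pr(X_1>x,S>s)=\int_x^\infty \Pr(S>s\mid X_1=u)\,f_1(u)\,du$, differentiating $\mathbf{\bar G}(x,s)$ in its first argument gives $-\partial_1\mathbf{\bar G}(x,s)=r_1(x)\,g\big(-\ln\bar H_1(x)-\ln\bar H_2(s-x)\big)$, and dividing by $f_1(x)=r_1(x)\,g(-\ln\bar H_1(x))$ yields $\Pr(S>s\mid X_1=x)=g(-\ln\bar H_1(x)-\ln\bar H_2(s-x))/g(-\ln\bar H_1(x))$, which is the complement of \eqref{G2|1}; the PDF then follows by one more differentiation in $s$. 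Both routes are short, so there is no substantial obstacle; the only points requiring care are the correct identification of the marginal density $f_1$ in the form $r_1(x)\,g(-\ln\bar H_1(x))$ and the use of the normalization $\bar H_2(0)=1$ to evaluate the boundary term.
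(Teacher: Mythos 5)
Your proof is correct and follows essentially the same route as the paper: divide the joint density $\mathbf{g}(x,s)$ from Proposition 3.1 by the marginal $f_1(x)=r_1(x)\,g(-\ln\bar H_1(x))$, then integrate the resulting exact $t$-derivative from $x$ to $s$, using $\bar H_2(0)=1$ to evaluate the boundary term. The alternative route via Proposition 3.2 is a nice observation but not needed.
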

\begin{proof}

From \eqref{g}, the PDF of $(X_1,S)$ is
$$\mathbf{g}(x,s)=-r_1(x)r_2(s-x)
g'(-\ln\bar H_1(x)-\ln\bar H_2(s-x))$$
for $0\leq x\leq s$. Moreover, the first marginal survival function is
$$\bar F_1(x)=\Pr(X_1>x)=\mathbf{G}(x,0)=\bar G(-\ln \bar H_1(x))
$$
and its PDF is
$f_1(x)=r_1(x)g(-\ln \bar H_1(x))$,
for $x\geq 0$.

Hence, the  PDF of $(S|X_1=x)$ for $x\geq 0$ such that $g_1(x)>0$, can be obtained as
$$f_{S|X_1}(s|x)=\frac{\mathbf{g}(x,s)}{f_1(x)}=-r_2(s-x)\frac{
g'(-\ln\bar H_1(x)-\ln\bar H_2(s-x))}{g(-\ln\bar H_1(x))}$$
for $s\geq x$ (zero elsewhere).

Then the associated distribution function is
\begin{align*}
F_{S|X_1}(s|x)
	&=\int_{x}^s f_{S|X_1}(t|x)dt\\ &=-\int_{x}^s  r_2(t-x)\frac{
	g'(-\ln\bar H_1(x)-\ln\bar H_2(t-x))}{g(-\ln\bar H_1(x))}dt\\
&=\left[-\frac{
	g(-\ln\bar H_1(x)-\ln\bar H_2(t-x))}{g(-\ln\bar H_1(x))}\right]_{t=x}^s\\
&=1-\frac{
	g(-\ln\bar H_1(x)-\ln\bar H_2(s-x))}{g(-\ln\bar H_1(x))}
\end{align*}
for $s\geq x\geq 0$ and we conclude the proof.
\end{proof}

Hence, the conditional survival function is
$$\bar F_{S|X_1}(s|x)	=\frac{
	g(-\ln\bar H_1(x)-\ln\bar H_2(s-x))}{g(-\ln\bar H_1(x))}$$
Clearly, this is a distortion representation from $\bar H_2(s-x)$, since
\begin{equation*}\label{G21C}
\bar F_{S|X_1}(s|x)=d_{S|X_1}(\bar H_2(s-x)|\bar H_1(x))
\end{equation*}
for $s\geq x>0$, where
$$d_{S|X_1}(v|u)=\frac{
	g(-\ln uv) }{g(-\ln u)} $$
for $v\in[0,1]$  is a distortion function for all $0<u<1$.

Note that the inverse function of $\bar{F}_{S|X_1}(v|u)$ (i.e. its quantile function)
can be obtained from the inverse functions  of $g$ and $\bar H_2$ as
\begin{equation}\label{IG2|1}
{\bar F}^{-1}_{S|X_1}(v|u)(q|x)	=x+\bar H_2^{-1} \left( \frac{\exp \left(-g^{-1}\left( qg(-\ln\bar H_1(x))\right)\right)}{\bar H_1(x)}\right)
\end{equation}
for $0<q<1$. The inverse function of ${F}_{S|X_1}$ can be obtained in a similar way.

One can thus predict $S$ from $X_1=x$ by using  the quantile (or median) regression curve with
$$m_{S|X_1}(x):= \bar{F}_{S|X_1}^{-1}(0.5|x).$$

Moreover, one can compute the centered $p$ confidence bands for these estimations as
$$I_p(x):=\left[\bar{F}_{S|X_1}^{-1}\left( \frac{1+p}2\mid x\right),\bar {F}_{S|X_1}^{-1}\left(\frac  {1-p}2\mid x\right) \right].$$
For example, the $p=90\%$ centered confidence band for $S$
is
$$I_{0.9}(x):=\left[\bar{F}_{S|X_1}^{-1}(0.95|x), \bar{F}_{S|X_1}^{-1}( 0.05|x) \right].$$
Such an interval is computed below in some illustrative examples.

\begin{remark}
In particular, for the GK-model in \eqref{GK-model} we get

\begin{equation}\label{G21new}
{\bar F}_{S|X_1}(s|x)	=\frac{
	g(\alpha x+\beta(s-x))}{g(\alpha x)}=\frac{
	g((\alpha-\beta)x+\beta s)}{g(\alpha x)}
\end{equation}
and
\begin{equation}\label{IG21}
\bar F^{-1}_{S|X_1}(q|x)=\frac{\beta-\alpha}{\beta} x+ \frac{1}{\beta}  g^{-1}\left( qg(\alpha x) \right)
\end{equation}
for $s\geq x\geq 0$ and $0<q<1$. Note that these expressions hold both for $\alpha\neq \beta$ and for $\alpha=\beta$.
\end{remark}

The other conditional distribution can be obtained in a similar manner. However, it is more difficult to get an explicit expression since we need the PDF {$f_S(s)$ of $S$. It is stated in the following proposition.

\begin{proposition}
	If \eqref{genGK} and \eqref{hatD-2} hold  for $(X_1,X_2)$ and $S=X_1+X_2$, then  the PDF of  $( X_1|S=s)$ is
	$${f}_{X_1|S}(x|s)=-\frac{r_1(x)r_2(s-x)
			g'\left(-\ln\bar H_1(x)-\ln\bar H_2(s-x)\right)}{f_S(s)}$$
	and its distribution function is
	\begin{equation}\label{G1|2}	
	{F}_{X_1|S}(x|s)=\red{-}\int_{0}^x \frac{r_1(t)r_2(s-t)
		g'\left(-\ln\bar H_1(t)-\ln\bar H_2(s-t)\right)}{f_S(s)}dt
	\end{equation}
	for $0\leq x\leq s$, where $g=-\bar G'$ and $r_i=(-\ln\bar H_i)'$ is the hazard rate function of $\bar H_i$ for $i=1,2$, and
	\begin{equation}\label{g2}
		f_S(s)=-\int_0^s r_1(x)r_2(s-x)
		g'(-\ln\bar H_1(x)-\ln\bar H_2(s-x))dx.
	\end{equation}
\end{proposition}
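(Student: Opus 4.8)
The plan is to derive all three displayed formulas from the joint PDF $\mathbf{g}(x,s)$ of $(X_1,S)$ already obtained in Proposition 3.1, exactly as was done for the conditional law of $(S\mid X_1=x)$ in Proposition 4.1, but now conditioning on the second coordinate instead of the first.

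\textbf{Step 1: the conditional density.} First I would recall from \eqref{g}, rewritten using $\bar G''=-g'$, that
$$\mathbf{g}(x,s)=-r_1(x)r_2(s-x)\,g'\!\left(-\ln\bar H_1(x)-\ln\bar H_2(s-x)\right)$$
for $0\le x\le s$. The marginal density of $S$ is then $f_S(s)=\int_0^s \mathbf{g}(x,s)\,dx$, which is precisely \eqref{g2}; this is the same integral that already appeared as the $s$-derivative of \eqref{G2} and as $f_S(s)=-\partial_2\mathbf{\bar G}(0,s)$, so nothing new is needed to justify it. Dividing, ${f}_{X_1\mid S}(x\mid s)=\mathbf{g}(x,s)/f_S(s)$ gives the claimed expression, valid on $0\le x\le s$ whenever $f_S(s)>0$.

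\textbf{Step 2: the conditional distribution function.} Integrating the conditional density in $x$ from $0$ to the argument yields
$$F_{X_1\mid S}(x\mid s)=\int_0^x {f}_{X_1\mid S}(t\mid s)\,dt=-\int_0^x\frac{r_1(t)r_2(s-t)\,g'\!\left(-\ln\bar H_1(t)-\ln\bar H_2(s-t)\right)}{f_S(s)}\,dt,$$
which is \eqref{G1|2} (the red minus sign being the correct one, since $g'\le 0$ makes the integrand nonnegative). Here, unlike in Proposition 4.1, the integrand does not telescope into a clean closed form, because the factor $r_1(t)$ acting on $-\ln\bar H_1(t)-\ln\bar H_2(s-t)$ is not the derivative of that argument — only $r_2(s-t)$ is (up to sign, via $\tfrac{d}{dt}\bigl(-\ln\bar H_2(s-t)\bigr)=-r_2(s-t)$). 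So the honest statement is just that $F_{X_1\mid S}$ is this integral, and one verifies consistency by checking $F_{X_1\mid S}(s\mid s)=1$: this amounts to $\int_0^s(-\mathbf g(t,s))\,dt/(-f_S(s))=1$, i.e. the very definition of $f_S(s)$ in \eqref{g2}, which closes the argument.

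\textbf{Main obstacle.} There is no real obstacle here: the proof is a routine ratio-of-densities computation once Proposition 3.1 is in hand. The only point requiring a word of care is the normalization/sign — confirming that $f_S(s)$ as written in \eqref{g2} is genuinely positive (so that division is legitimate) and that the displayed $F_{X_1\mid S}$ integrates to $1$ at $x=s$; both follow immediately because $g'$ is nonpositive ($g=-\bar G'$ is a density and $\bar G$ is convex, so $\bar G''=-g'\ge 0$), and because $f_S(s)=\int_0^s\mathbf g(t,s)\,dt$ by Fubini applied to the PDF $\mathbf g$. I would state this explicitly rather than ``grinding'' any integral, since unlike the $(S\mid X_1)$ case no antiderivative in elementary form is expected.
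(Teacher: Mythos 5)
Your proof is correct and follows essentially the same route as the paper: recover $\mathbf{g}(x,s)$ from \eqref{g} using $\bar G''=-g'$, obtain $f_S$ as the marginal integral \eqref{g2}, divide to get the conditional density, and integrate in $x$ for the distribution function. The extra remarks on the sign of $g'$ and the normalization at $x=s$ are sound and only make explicit what the paper leaves implicit.
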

\begin{proof}
	
From \eqref{g}, the PDF of $(X_1,S)$ is
$$\mathbf{g}(x,s)=-r_1(x)r_2(s-x)
	g'(-\ln\bar H_1(x)-\ln\bar H_2(s-x))$$
for $0\leq x\leq s$. Its second  marginal survival function was obtained in \eqref{G2}. It can also be obtained as in \eqref{g2}.
	
	Hence, the conditional PDF of $(X_1|S=s)$ is
$${f}_{S|X_1}(x|s)=\frac{\mathbf{g}(x,s)}{f_S(s)}=-\frac{r_1(x)r_2(s-x)
	g'(-\ln\bar H_1(x)-\ln\bar H_2(s-x))}{f_S(s)}.$$

Then the associated distribution function is the one given in \eqref{G1|2} for $0\leq x\leq s$ and the assertion is proved.
\end{proof}

In particular, for the GK-model we have the following explicit expressions.

\begin{proposition}
	If \eqref{GK-model} holds for $(X_1,X_2)$ and $S=X_1+X_2$, then  the distribution function of  $( X_1|S=s)$ is
	\begin{equation}\label{G1|2a}	
	{F}_{X_1|S}(x|s)=\frac{g((\alpha-\beta)x+\beta s)-g(\beta s)}{ g(\alpha s)-g(\beta s)}
	\end{equation}
when $\alpha\neq \beta$ and
\begin{equation}\label{G1|2b}	
{F}_{X_1|S}(x|s)=\frac{x}{s}
\end{equation}
when $\alpha=\beta$, for $0\leq x\leq s$, where $g=-\bar G'$ and $\alpha,\beta>0$ are the scale parameters in \eqref{GK-model}.
\end{proposition}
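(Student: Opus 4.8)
The plan is to specialize the general Proposition on the distribution of $(X_1\mid S=s)$ to the GK-model, where $\bar H_i$ are exponential, and then carry out the integral in \eqref{G1|2} explicitly. First I would substitute $\bar H_1(x)=\exp(-\alpha x)$ and $\bar H_2(x)=\exp(-\beta x)$, so that $r_1\equiv\alpha$, $r_2\equiv\beta$, and $-\ln\bar H_1(t)-\ln\bar H_2(s-t)=\alpha t+\beta(s-t)=(\alpha-\beta)t+\beta s$. This turns the numerator integrand of \eqref{G1|2} into $\alpha\beta\, g'((\alpha-\beta)t+\beta s)$, and the denominator $f_S(s)$ into $-\int_0^s\alpha\beta\,g'((\alpha-\beta)t+\beta s)\,dt$, so the constant $\alpha\beta$ cancels and one is left with a ratio of one-variable integrals of $g'$.

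The key step is then a direct antiderivative computation. When $\alpha\ne\beta$, the chain rule gives $\frac{d}{dt}\,g((\alpha-\beta)t+\beta s)=(\alpha-\beta)g'((\alpha-\beta)t+\beta s)$, so
\[
-\int_0^x\alpha\beta\,g'((\alpha-\beta)t+\beta s)\,dt
=-\frac{\alpha\beta}{\alpha-\beta}\bigl[g((\alpha-\beta)t+\beta s)\bigr]_{t=0}^{x}
=-\frac{\alpha\beta}{\alpha-\beta}\bigl(g((\alpha-\beta)x+\beta s)-g(\beta s)\bigr).
\]
Taking $x=s$ in the same formula gives $f_S(s)=-\frac{\alpha\beta}{\alpha-\beta}\bigl(g(\alpha s)-g(\beta s)\bigr)$ (consistent with \eqref{fs1}), and dividing yields exactly \eqref{G1|2a}, with the prefactor $-\frac{\alpha\beta}{\alpha-\beta}$ cancelling. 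When $\alpha=\beta$ the argument $(\alpha-\beta)t+\beta s=\alpha s$ is constant in $t$, so the integrand $g'(\alpha s)$ does not depend on $t$; hence $\int_0^x$ of it is $x\,g'(\alpha s)$ and $f_S(s)=-\int_0^s\alpha^2 g'(\alpha s)\,dt=-\alpha^2 s\,g'(\alpha s)$ (matching \eqref{fs2}), so ${F}_{X_1\mid S}(x\mid s)=\dfrac{-\alpha^2 x\,g'(\alpha s)}{-\alpha^2 s\,g'(\alpha s)}=\dfrac{x}{s}$, which is \eqref{G1|2b}. Note this also identifies $(X_1\mid S=s)$ as uniform on $[0,s]$ in the Schur-constant case, as expected.

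The only mild obstacle is sign bookkeeping: \eqref{G1|2} as displayed in the excerpt carries a (red-flagged) leading minus sign in front of the integral, and $g'$ is typically negative since $g=-\bar G'$ with $\bar G$ convex decreasing, so one must track signs carefully to see that $F_{X_1\mid S}$ is indeed a nonnegative, nondecreasing function equal to $1$ at $x=s$. In fact both the numerator difference $g((\alpha-\beta)x+\beta s)-g(\beta s)$ and the denominator $g(\alpha s)-g(\beta s)$ have the same sign (by monotonicity of $g$ along the relevant arguments), so the ratio lies in $[0,1]$, and plugging $x=0$ and $x=s$ confirms the boundary values $0$ and $1$. I would present the $\alpha\ne\beta$ case and the $\alpha=\beta$ case separately, checking the boundary conditions at the end as a sanity check; no genuine difficulty beyond this is anticipated.
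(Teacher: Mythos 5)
Your proposal is correct and follows essentially the same route as the paper: both specialize the joint density $\mathbf{g}(x,s)=-\alpha\beta\,g'((\alpha-\beta)x+\beta s)$ to the exponential $\bar H_i$, divide by $f_S$ as given in \eqref{fs1}--\eqref{fs2}, and evaluate the resulting integral via the antiderivative $\tfrac{1}{\alpha-\beta}\,g((\alpha-\beta)t+\beta s)$ (respectively the constant integrand when $\alpha=\beta$). Your additional sign and boundary checks are consistent with, and slightly more explicit than, the paper's argument.
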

\begin{proof}

From the preceding proposition we have
$$\mathbf{g}(x,s)=-r_1(x)r_2(s-x)
g'(-\ln\bar H_1(x)-\ln\bar H_2(s-x))= -\alpha\beta g'((\alpha-\beta)x+\beta s)$$
for $0\leq x\leq s$ (zero elsewhere). Its second  marginal PDF function $f_S$ was obtained in \eqref{fs1} ($\alpha\neq\beta$) and in \eqref{fs2} ($\alpha=\beta$).
	
In the first case we get	
$${f}_{X_1|S}(x|s)=\frac{\mathbf{g}(x,s)}{f_S(s)}
=(\alpha-\beta) \frac{g'((\alpha-\beta)x+\beta s)}{ g(\alpha s)-g(\beta s)}$$
and in the second
$${f}_{X_1|S}(x|s)=\frac{\mathbf{g}(x,s)}{f_S(s)}
=\frac{-\alpha^2g'(\alpha s)}{ -\alpha^2sg'(\alpha s)}=\frac{1}{s}$$
for $0\leq x\leq s$.

Then the associated distribution functions are
\begin{align*}
{F}_{X_1|S}(x|s)
&=\int_{0}^x (\alpha-\beta) \frac{g'((\alpha-\beta)t+\beta s)}{ g(\alpha s)-g(\beta s)}dt\\
&=\left[  \frac{g'((\alpha-\beta)t+\beta s)}{ g(\alpha s)-g(\beta s)}\right]_{t=0}^x\\
&=  \frac{g((\alpha-\beta)x+\beta s)-g(\beta s)}{ g(\alpha s)-g(\beta s)}
\end{align*}
(in the first case) and
\begin{align*}
{F}_{X_1|S}(x|s)&=\int_{0}^x \frac 1 s dt=\frac x s
\end{align*}
(in the second case) for $0\leq x\leq s$. 
\end{proof}

Note that the expression \eqref{G1|2b} was obtained previously in Proposition 2.3 of \cite{CS94} for the Schur-constant model (which is equivalent to \eqref{GK-model} with $\alpha=\beta$).

As in the preceding case, expressions \eqref{G1|2a} and \eqref{G1|2b} can be used to obtain quantile regression curves to predict $X_1$ from $S$. An illustrative example is given in the following section. 	In both cases, they can be represented as distorted distributions from  $G$ by replacing $x$ with $G^{-1}(G(x))$ and  $s$ with $G^{-1}(G(s))$.

In the second case ($\alpha=\beta$),  the inverse function is
${F}^{-1}_{X_1|S}(q|s)=qs$ for $0<q<1$ and the trivial median regression curve is $m_{X_1|S}(s)=s/2$ (which in this case coincides with the classic mean regression curve $E(X|S=s)$).

In the first case ($\alpha\neq\beta$), we get
\begin{equation}\label{IG12}
{F}^{-1}_{X_1|S}(q|s)=\frac \beta {\beta -\alpha}s+\frac 1 {\alpha -\beta} g^{-1}\left( q g(\alpha s)+(1-q)g(\beta s)\right)
\end{equation}
for $0<q<1$ and $s>0$. Then the median regression curve is
$$m_{X_1|S}(s)=\frac \beta {\beta -\alpha}s+\frac 1 {\alpha -\beta} g^{-1}\left( \frac 1 2 g(\alpha s)+\frac 1 2g(\beta s)\right).
$$
The confidence bands can be obtained in a similar manner from \eqref{IG12} (see Example \ref{Ex2}).

\section{Examples}

In this section we provide  some examples to illustrate the theoretical findings described in previous sections. In the first one we consider the sum of two dependent random variables satisfying the GK-model proposed in \cite{GK}, i.e., the model \eqref{GK-model}.

\begin{example}\label{Ex1}
Let us assume that $(X_1,X_2)$ satisfies  \eqref{GK-model} for $\alpha\neq \beta$ and   $\bar G(x)=(1+x)^{-\gamma}$ for $x\geq 0$ (Pareto type II survival function) and $\gamma>0$. This model is equivalent to consider an Archimedean Clayton copula with $\theta=1/\gamma$ and Pareto type II marginals. Then, from \eqref{G-GK1}, the joint distribution function of $(X_1,S)$ is
\begin{align*}
\mathbf{G}(x,s)
&= G(\alpha x)-\frac {\alpha}{\alpha-\beta}  G ((\alpha-\beta)x+\beta s )+ \frac {\alpha}{\alpha-\beta}  G(\beta s )\\
&=1-\left( 1+\alpha x\right)^{-\gamma}+\frac {\alpha}{\alpha-\beta}\left( 1+(\alpha-\beta)x+\beta s\right)^{-\gamma} -\frac {\alpha}{\alpha-\beta} \left( 1+\beta s\right)^{-\gamma}
\end{align*}
for $0\leq x\leq s$. Hence, the distribution function $F_S$ of $S$ (i.e. the C-convolution) is
$$
F_S(s)=\mathbf{G}(s,s)=1+\frac {\beta}{\alpha-\beta}  (1+\alpha s)^{-\gamma} -\frac {\alpha}{\alpha-\beta}  (1+\beta s)^{-\gamma}
$$
for $s\geq 0$. Its PDF is
$$f_S(s)=\frac {\alpha \beta \gamma}{\alpha-\beta}  (1+\beta s)^{-\gamma-1}-\frac {\alpha \beta \gamma}{\alpha-\beta}  (1+\alpha s)^{-\gamma-1}  $$
for $s\geq 0$. The distribution of $S$ is a negative mixture of two Pareto type II distributions and so its hazard rate goes to zero when $s\to\infty$ (which is the limit of the hazard rates of the members of the C-convolution). They are plotted in Figure \ref{fig1} (right)  jointly with the associated PDF functions (left) for $\gamma=\alpha=2$ and $\beta=1$. Note that the hazard rates of $X_1$ and $X_2$ are decreasing while the one of $S$ is not monotone (showing that the IFR class is not preserved by the sum of dependent r.v.).

If we want to predict $X_1$ from $S=s$, we need the conditional distribution obtained from  \eqref{G1|2a} as
$$F_{X_1|S}(x|s)=\frac{g((\alpha-\beta)x+\beta s)-g(\beta s)}{ g(\alpha s)-g(\beta s)}=\frac{(1+(\alpha-\beta)x+\beta s)^{-\gamma-1}-(1+\beta s)^{-\gamma-1}}{ (1+\alpha s)^{-\gamma-1}-(1+\beta s)^{-\gamma-1}}
$$
for $0\leq x\leq s$.
Its inverse function is then
$$F^{-1}_{X_1|S}(q|s)=  \frac{-1-\beta s+ \left(q(1+\alpha s)^{-\gamma-1}+(1-q) (1+\beta s)^{-\gamma-1}\right)^{-1/(\gamma+1)}}{\alpha-\beta}
$$
for $0<q<1$. The median regression curve is obtained by replacing $q$ with $1/2$. It is plotted in Figure \ref{fig2},  jointly with a sample from $(X_1,S)$ and the associated $50\%$ and $90\%$ centered confidence bands. We also include there the parametric (left) and non-parametric (right) estimations for these curves. Here, non-parametric means that we use  the linear quantile regression procedure in R.

To estimate the parameters in the model from the sample  we use recall the Kendall's tau coefficient of $(X_1,X_2)$ is
$$\tau=\frac{\theta}{2+\theta}=\frac 1{1+2\gamma}$$
(see \cite{N06}, p. 163). Therefore, $\gamma$ is estimated by
$$\widehat \gamma= \frac{1-\widehat \tau}{2\widehat \tau}=\frac {1-0.158}{2\cdot 0.158}=2.664557.$$
Then we recall that $E(X_1)=1/(\alpha (\gamma-1))$ and $E(X_2)=1/(\beta (\gamma-1))$ to estimate $\alpha$ and $\beta$, obtaining
$$\widehat \alpha=\frac1{(\widehat \gamma-1 )\bar X_1 }=\frac 1{1.664557\cdot 0.3880776}=1.548042$$
and
$$\widehat \beta=\frac1{(\widehat \gamma-1 )\bar X_2 }=\frac 1{1.664557\cdot 0.8674393}=0.6925677.$$

For the non-parametric linear estimators of the quantile regression curves, we used the R library \texttt{quantreg} (see \cite{K05,KB78,N20}). The estimated median regression line to estimate $X_1$ from $S$ obtained from our sample is
$$\widehat m_{X_1|S}(s)=0.09752378 +0.17721635 s.$$

The procedure to predict $S$ from $X_1$ is analogous.

\end{example}


\begin{figure}[t]
	\begin{center}
\includegraphics[scale=0.55]{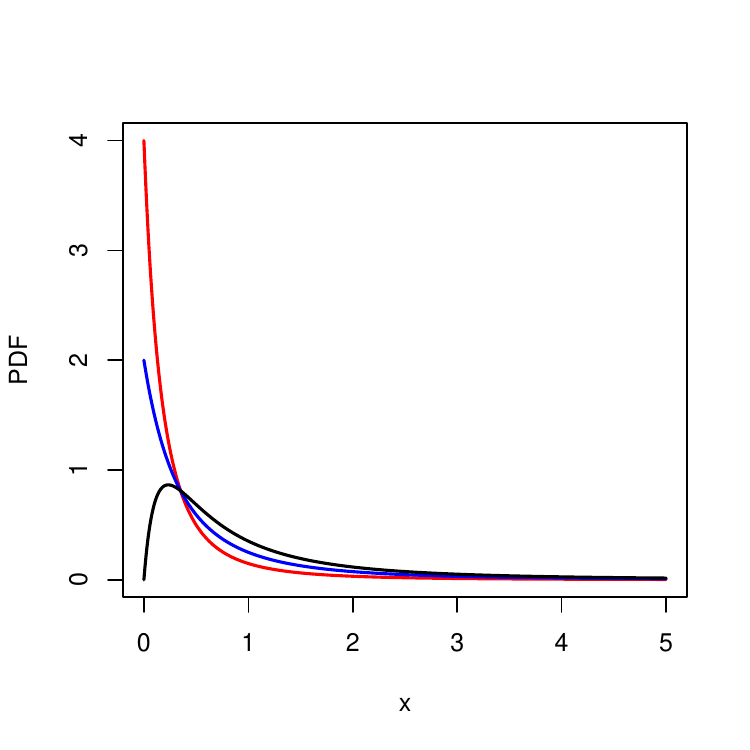}		\includegraphics[scale=0.55]{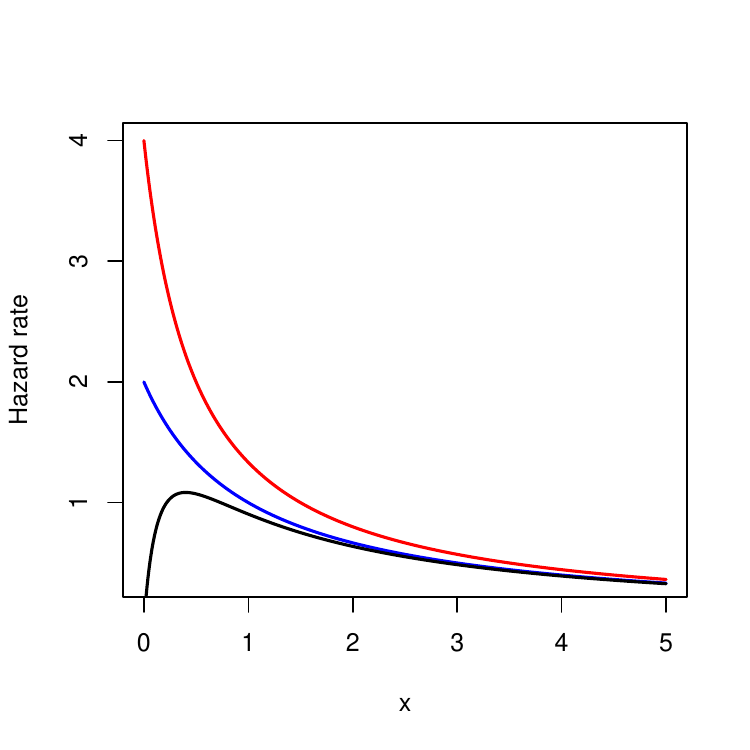}
	\end{center}
	\caption{Probability density (left) and hazard rate (right) functions for $X_1$ (red), $X_2$ (blue) and  $S=X_1+X_2$ (black) under the dependence model \eqref{GK-model} with Pareto type II  marginals studied in Example \ref{Ex1}.}%
	\label{fig1}%
\end{figure}

	\begin{figure}[h!]
		\begin{center}
		\includegraphics[scale=0.55]{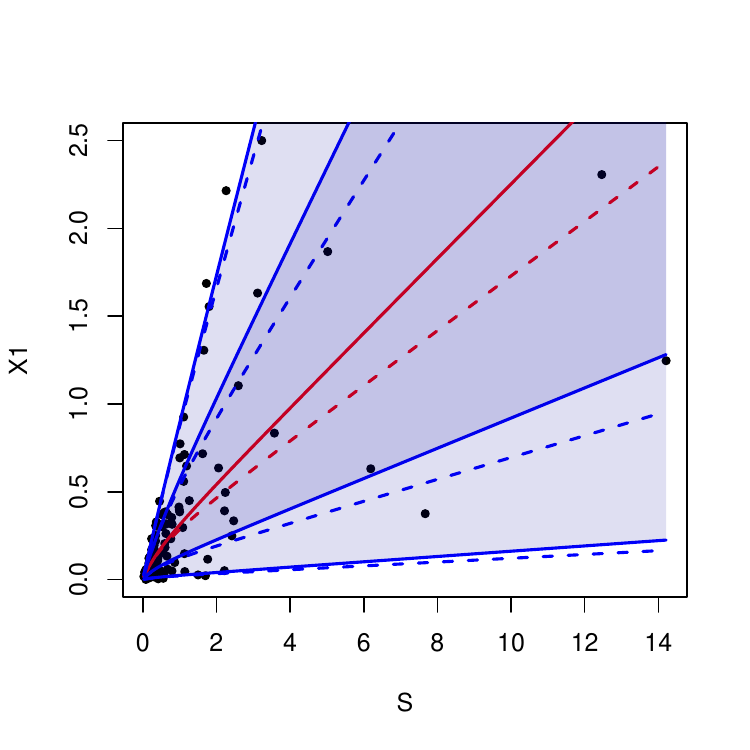}
		\includegraphics[scale=0.55]{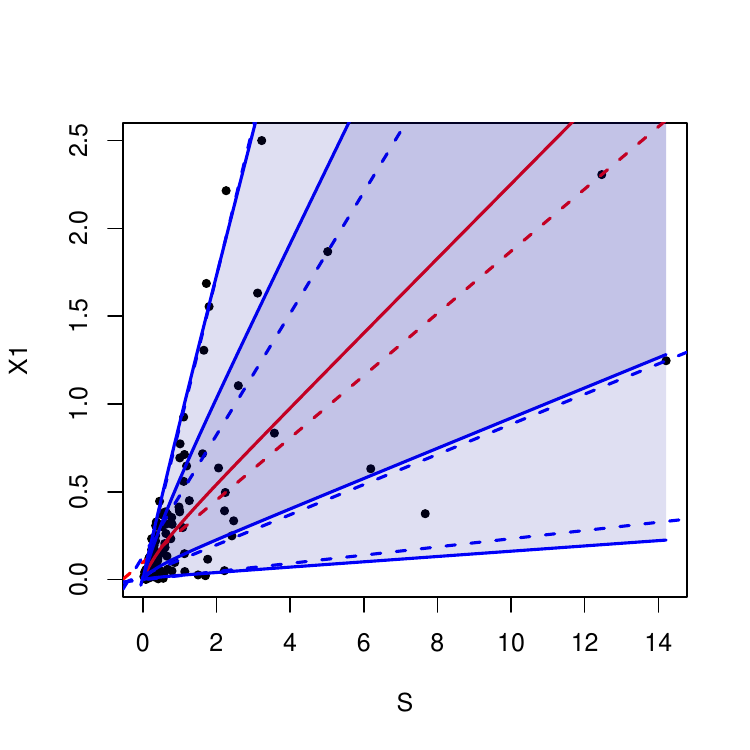}
		\end{center}
		\caption{Scatterplot of the simulated sample  from $(S,X_1)$  in Example \ref{Ex1} jointly with the exact median regression curve (continuous red lines) and the exact $50\%$  and $90\%$  confidence bands (continuous blue lines). The dashed lines represent the estimated curves when the model is known and the parameters are estimated (left) and when the model is unknowns and we use a non-parametric linear quantile regression  estimator (right) from these data.}%
		\label{fig2}%
	\end{figure}

\quad

\quad
	
In the next example we consider the general dependence model defined in \eqref{genGK} and \eqref{hatD-2}. In this case we show how to predict $S$ from $X_1$.

\begin{example}\label{Ex2}
Recall that in \eqref{genGK} we assume that the joint survival  function of $(X_1,X_2)$ can be written as
\begin{equation*}\label{GDmodel}
\mathbf{\bar F}(x_1,x_2)=\widehat D(\bar H_1(x_1), \bar H_2(x_2)),
\end{equation*}
where $\bar H_1$ and  $\bar H_2$ are two absolutely continuous survival functions with $\bar H_1(0)=\bar H_2(0)=1$, while \eqref{hatD-2} asserts that
\begin{equation}\label{DD2}
\widehat D(u,v)=\bar G( - \ln(uv))
\end{equation}
for $u,v\in[0,1]$, where $\bar G$ satisfies the properties stated after Eq. \eqref{hatD-2}. This model is a bivariate distorted distribution, for which the marginal survival functions are
$\bar F_i(x_i)=\bar G(-\ln(\bar H_i(x_i)))$, for $i=1,2.$ Thus, we can use the expressions obtained in Section 4, \eqref{G2|1} and \eqref{IG2|1}, to predict $S$ from $X_1$.


For example, we can choose
$$\bar G(x)=\bar H_1(x)=\bar H_2(x)=c\cdot (1-\Phi(1+x))=c\cdot \Phi(-1-x))$$  for $x\geq 0$, where $\Phi$ is the standard normal distribution and $c=1/\Phi(-1)=6.302974$  (i.e. $G$ is a truncated Normal distribution). Hence $g(x)=c\cdot \phi(1+x)$ where $\phi=\Phi'$ is the PDF of a standard normal distribution. Note that, in this case, the associated Archimedean copula (that we could call  Gaussian Archimedean copula) does not have an explicit expression (since it depends on $\bar G$ and on $\bar G^{-1}$). Thus, this is a practical example where the distortion representation \eqref{DD2} can be used as a proper alternative.

Its inverse functions are
$$\bar G^{-1}(x)=-1-\Phi^{-1}(x/c)$$
and
$$g^{-1}(x)=-1+\left( 2\ln c-\ln(2\pi)-2\ln x \right)^{1/2}.$$
By using these expressions we compute ${\bar F}^{-1}_{S|X_1}$ as in 
\eqref{IG2|1}, obtaining the quantile regression curve plotted in Figure \ref{fig3}, left. The same figure also includes a sample of $n=100$ points from $(X_1,S)$ and the exact centered $50\%$  and $90\%$ (blue) confidence bands. Moreover, it shows the plot of the non-parametric linear quantile estimate (dashed lines) obtained from this sample.

As we know that $X_1<S$, we could also provide bottom  $50\%$ and $90\%$  confidence bands obtained as $\left[x,{\bar F}^{-1}_{S|X_1}(0.5|x)\right]$
and $\left[x,{\bar F}^{-1}_{S|X_1}(0.1|x)\right]$, respectively. They are plotted in Figure \ref{fig3}, right. In this case, the median regression curve is also the upper limit for the $50\%$   confidence band. In our sample we obtain $10$ data above the upper (exact) limit and $46$ above the median regression curve (i.e. $54$ data in the exact bottom $50\%$  confidence band). The estimated median regression line obtained from our sample is
$$\widehat m_{S|X_1}(x)=0.3159734 +0.7284655x$$
for $x\geq 0$.
\end{example}

\begin{figure}[t]
	\begin{center}
\includegraphics[scale=0.55]{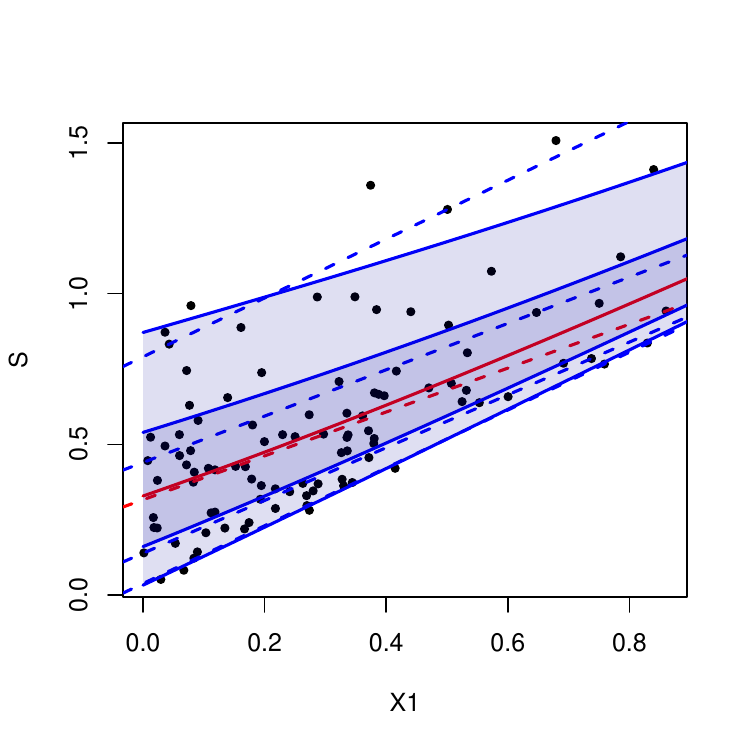}
\includegraphics[scale=0.55]{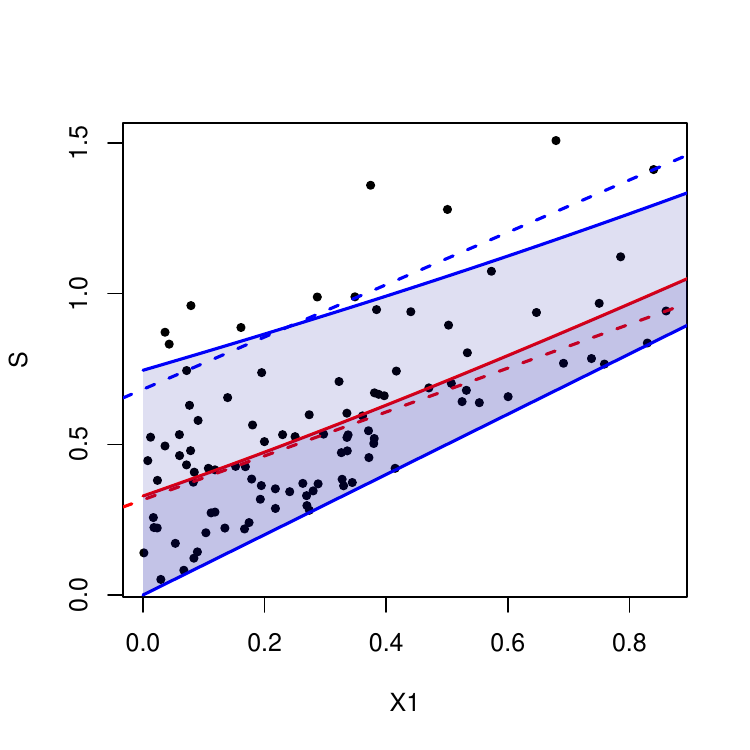}
	\end{center}
	\caption{Scatterplot of the simulated sample  from $(X_1,S)$  in Example \ref{Ex2} jointly with the median regression curve (red) and the centered (left) or bottom (right) $50\%$  and $90\%$  confidence bands (blue). The dashed lines represent the estimated values when we use a linear quantile regression estimator.}%
	\label{fig3}%
\end{figure}

\quad

In the next example we show a case of model \eqref{GK-model} that cannot be represented with an explicit Archimedean copula, thus for which the distortion representations consists in a useful alternative tool. In fact, in this example $\bar G$ is convex and an explicit expression for its inverse is not available. For this model we compute the explicit expressions for the C-convolution and the two conditional survival functions.

\begin{example}\label{ex3}
Consider \eqref{GK-model} with $\alpha\neq \beta$ and the survival function
$$\bar G(x)=\frac{2+x}{2} e^{-x}$$
for $x\geq 0$. Its PDF is
$$g(x)=\frac{1+x}{2} e^{-x}$$
for $x\geq 0$, that is, it is a translated  Gamma (Erlang) distribution. The joint survival function of $(X_1,X_2)$ is
$$\mathbf{\bar F}(x_1,x_2)=\bar G(\alpha x_1+\beta x_2)= \frac{1+\alpha x_1+\beta x_2}2 \exp(-\alpha x_1-\beta x_2)$$
for $x_1,x_2\geq 0$.  The marginals have also translated  Gamma distributions.

The joint distribution of $(X_1,S)$ can be obtained from \eqref{G-GK1}. From this expression, one can obtain the survival function of $S$ (C-convolution) as
\begin{align*}
\bar F_S(s)
&=\frac {\alpha}{\alpha-\beta} \bar G(\beta s )- \frac {\beta}{\alpha-\beta}  \bar G (\alpha s )\\
&=\frac {\alpha}{\alpha-\beta}e^{-\beta s}- \frac {\beta}{\alpha-\beta}  e^{-\alpha s}  + \frac {\alpha \beta s}{2(\alpha-\beta)}\left(e^{-\beta s}-e^{-\alpha s}\right)
\end{align*}
for $s\geq 0$. Note that it is a negative mixture of two translated Gamma distributions.

The conditional survival function  of $(S|X_1=x)$ can be obtained from \eqref{G21new} as
$$\bar F_{S|X_1}(s|x)=\frac{g((\alpha-\beta)x+\beta s)}{g(\alpha x)}=\frac{1+ (\alpha-\beta)x+\beta s}{1+\alpha x}e^{-\beta(s-x)}$$
for $s\geq x$. Analogously,  from \eqref{G1|2a}, the
conditional survival function  of $(X_1|S=s)$ is
$$\bar F_{X_1|S}(x|s)=\frac{g(\alpha s)-g((\alpha-\beta)x+\beta s)}{g(\alpha s)-g(\beta s)}
=\frac{1+\alpha s -(1+(\alpha-\beta)x+\beta s)e^{(\alpha-\beta)(s-x)}}
{1+\alpha s-(1+\beta s)e^{(\alpha-\beta)s}}$$
for $0\leq x\leq s$.

In Figure \ref{fig4} we plot the probability density (left) and  hazard rate (right) functions of $X_1$ (red), $X_2$ (blue) and $S$ (black) when $\alpha=2$ and $\beta=1$. Note that both marginals are IFR and the same holds for $S$. Also note  that the limiting behaviour of the hazard rate of $S$ coincides with that of the best component ($X_2$) in the sum. This is according with the results on mixtures obtained in Lemma 3.3 of \cite{NS06} (or Lemma 4.6 in \cite{NS20}) and  that in Theorem 1 of \cite{BLS15} on usual convolutions.

\end{example}

\begin{figure}[t]
	\begin{center}
		\includegraphics[scale=0.55]{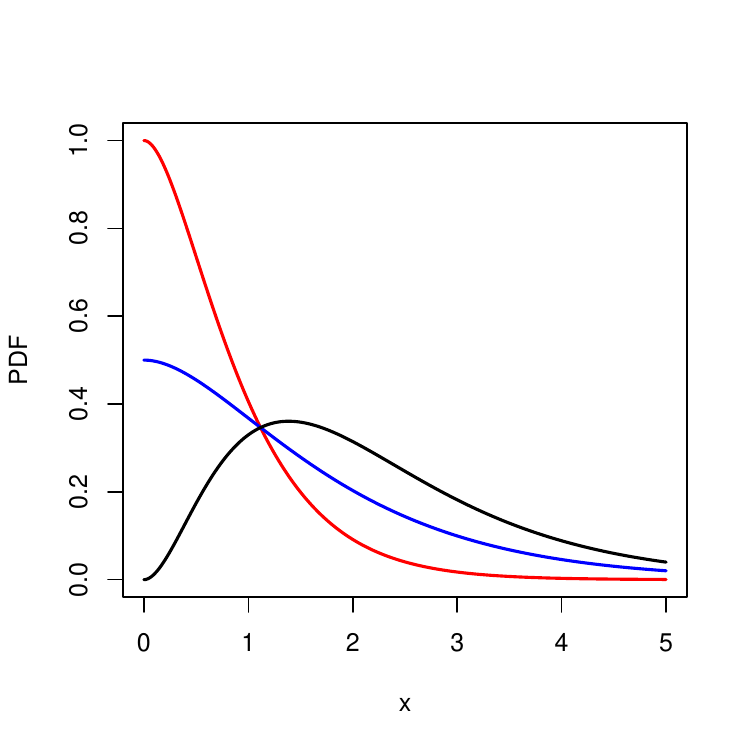}		\includegraphics[scale=0.55]{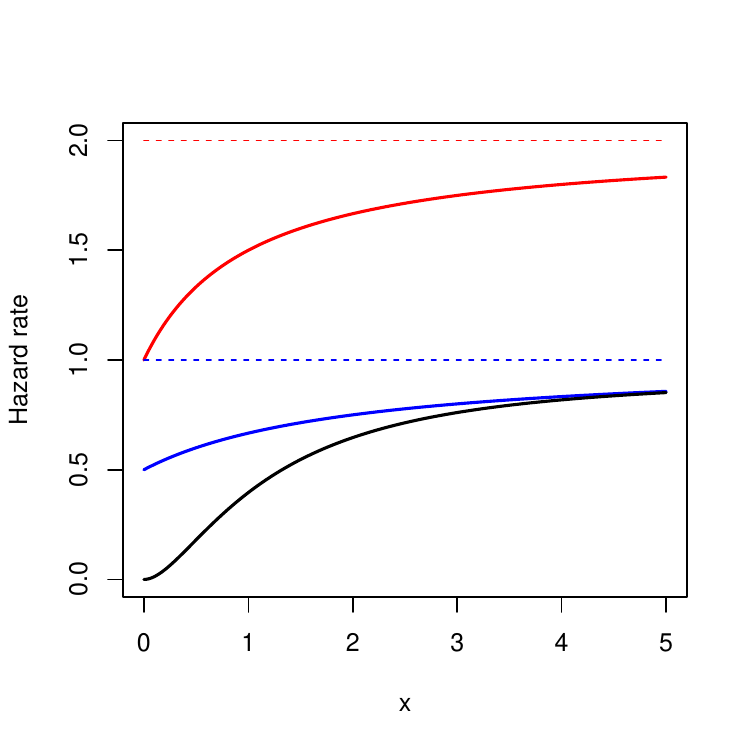}
	\end{center}
	\caption{Probability density (left) and hazard rate (right) functions for $X_1$ (red), $X_2$ (blue) and  $S=X_1+X_2$ (black) under the dependence model \eqref{GK-model} with translated Gamma marginals studied in Example \ref{ex3}. The dashed lines represent the limiting behaviour.}%
	\label{fig4}%
\end{figure}

In the last example we show a case dealing with the GK model \eqref{GK-model} where the inverse of the conditional distribution function $F_{(X_1|S)}$ of $(X_1|S)$ cannot be obtained in a closed form. Then we need to use numerical methods (or implicit functions plots). It also shows that the quantile (median) regression curve $m_{(X_1|S)}(s)=F_{(X_1|S)}(0.5|s)$ is not always increasing.

\begin{example}\label{ex4}
Let us consider the model \eqref{GK-model} which leads to a survival copula in the family of  Gumbel-Barnett copulas (see (4.2.9) in \cite{N06}, p.\ 116). In this case, the additive generator of the copula is $\bar G^{-1}(t)=\ln(1-\theta\ln t)$ for $t\in (0,1]$ and $\theta\in(0,1]$. These copulas are strict Archimedean copulas and the independence (product) copula is obtained for $\theta\to 0$. Hence,
$$\bar G(t)= \exp\left( \frac 1 \theta - \frac 1 \theta e^t  \right)$$
and
$$g(t)=\frac 1 \theta\exp\left( t+\frac 1 \theta - \frac 1 \theta e^t  \right)$$
for $t\geq 0$. Note that the inverse of $g$ has not an explicit form, thus one cannot use \eqref{IG12} to compute the quantile functions of $(X_1|S)$. The same happens in \eqref{IG21} for  the quantile functions of $(S|X_1)$.

However, it is possible to plot the level curves of the conditional distribution function by using \eqref{G1|2a}, obtaining
\begin{equation}\label{G1|2aGB}	
{F}_{(X_1|S)}(x|s)=\frac{g((\alpha-\beta)x+\beta s)-g(\beta s)}{ g(\alpha s)-g(\beta s)}
\end{equation}
when $\alpha\neq \beta$. For example, if we choose $\alpha=3$, $\beta=1$  and $\theta=1$ in \eqref{G1|2aGB}, we get
\begin{equation*}\label{G1|2aGB}	
{F}_{(X_1|S)}(x|s)=\frac{g(2x+ s)-g(s)}{ g(3s)-g(s)}
=\frac{\exp\left( 2x+s+ 1 -   e^{2x+s}  \right) -\exp\left( s+ 1 -   e^{s}  \right)  }{\exp\left( 3s+ 1 -   e^{3s}  \right)-\exp\left( s+ 1 -   e^{s}  \right)}
\end{equation*}
for $0\leq x\leq s$. These level curves for $q=0.05,0.25,0.5,0.75,0.95$ are plotted in Figure \ref{fig5}, left. Note that the median regression curve $m_{(X_1|S)}(s)=F^{-1}_{(X_1|S)}(0.5|s)$ (red line, left) is first increasing and then decreasing. To explain this surprising fact we plot ${F}_{(X_1|S)}(x|s)$ in Figure \ref{fig5}, right, for different values of $s$, where one can observe that these distribution functions are not ordered in $s$, that is, $(X_1|S=s)$ is not stochastically increasing in $s$. Here the greater values for $X_1$ are obtained when $S\approx 0.6$ (green line). Also note that $E(X_2)=3E(X_1)$ and that $X_1$ and $X_2$ are negatively correlated. Therefore, the greater values of $S$ are mainly obtained from the greater values of $X_2$ and the smaller values of $X_1$. Thus $m_{(X_1|S)}$ is decreasing at the end.

Also note that
$$Cov(X_1,S)=Var(X_1)+Cov(X_1,X_2)=Var(X_1)+E(X_1X_2)-E(X_1)E(X_2).$$
Therefore, $Cov(X_1,S)\geq 0$ when $Cov(X_1,X_2)\geq 0$ and, in particular, when $X_1$ and $X_2$ are independent. However, the covariance $Cov(X_1,S)$ will be negative  if $Var(X_1)<-Cov(X_1,X_2)$. In our case, the marginal reliability functions of $X_1$ and $X_2$ are $\bar F_1(t)=\bar G(3t)$ and  $\bar F_2(t)=\bar G(t)$, respectively. Their means are $E(X_1)=0.198782$ and $E(X_2)=0.596347$, their variances $Var(X_1)=0.019589$ and $Var(X_2)=0.176301$ and their covariance  $Cov(X_1,X_2)=-0.029889$. Hence
$$Cov(X_1,S)=Var(X_1)+Cov(X_1,X_2)=0.019589-0.029889=-0.010299<0.$$

\end{example}

\begin{figure}[t]
	\begin{center}
		\includegraphics[scale=0.55]{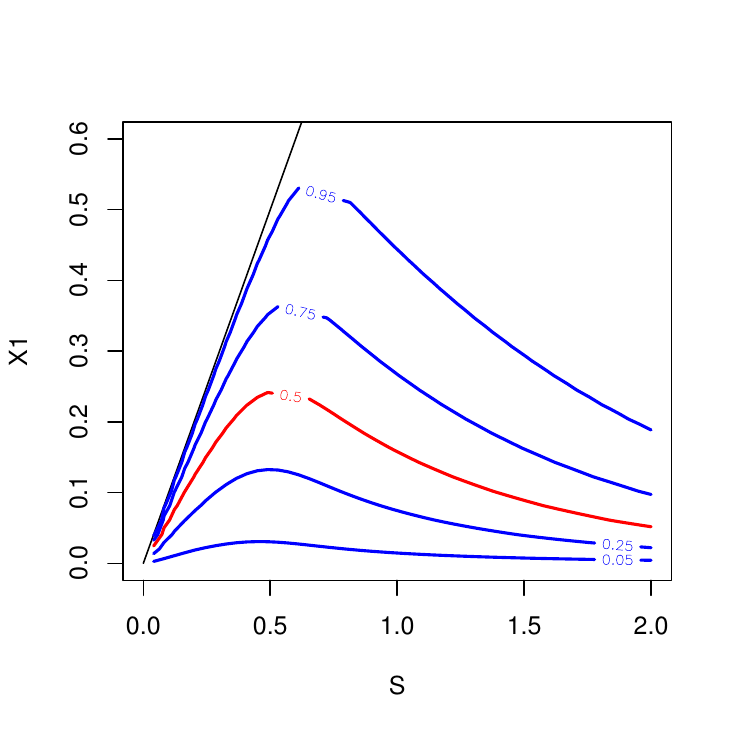}
		\includegraphics[scale=0.55]{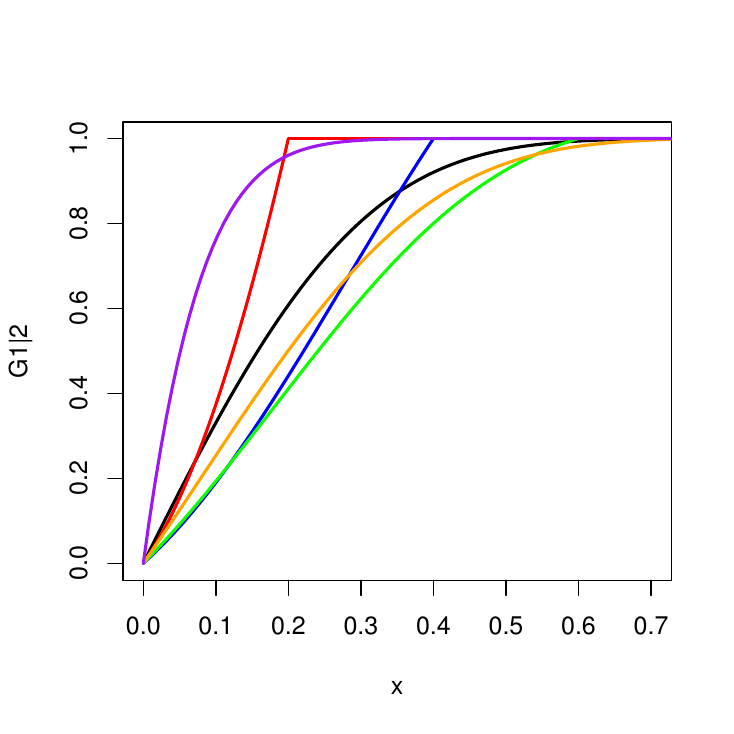}
	\end{center}
	\caption{Median regression curve (red) and  quantile regression curves  (blue) for $q=0.05,0.25,0.75,0.95$
 (left) for $(S,X_1)$  in Example \ref{ex4}. Conditional distribution functions  ${G}_{1|2}(x|s)$ for $s=0.2$ (red), $0.4$ (blue), $0.6$ (green), $0.8$ (orange), $1$ (black) and $2$ (purple). The black line in the left plot represents the line $X_1=S$.  
	}%
	\label{fig5}%
\end{figure}

\section{Conclusions}

We formulated the TTE dependence model by using a distortion representation based on a specific fixed distortion function $\widehat D$. This representation is useful to compute the joint distribution of $X_1$ and the sum $S=X_1+X_2$, as well as to provide expressions for the survival function of $S$ and the conditional distributions of $S$ given $X_1$ or $X_1$ given $S$.  They can be used also to predict one value from the other by using quantile regression. Some examples illustrate these facts, showing that sometimes the classical copula approach can not be applied.

This paper is a first step on applications of distortion representations for dependence models. Thus, there are several tasks for future research. The main one could be to get explicit models by choosing appropriate functions $\bar G$, $\bar H_1$ and $\bar H_2$, to study their main properties and how they fit to real data sets, allowing for the use of the prediction techniques developed here for these data sets. Other interesting questions deal with dependence models for which the multivariate distortion function $\widehat D$ differs from the one in Eq.  \eqref{hatD-2}, or how to get explicit expressions for the multivariate case.

	
\section*{Acknowledgements}


JN and JM are supported  by Ministerio de Ciencia e Innovación  of Spain under grant PID2019-103971GB-I00/AEI/10.13039/501100011033. FP is partially supported by the grant \emph{Progetto di Eccellenza, CUP: E11G18000350001} and by the Italian GNAMPA research group of
INdAM (Istituto Nazionale Di Alta Matematica).

\end{document}